\newtheorem{thm}{Theorem}[section]
\newtheorem{lem}[thm]{Lemma}
\newtheorem{cor}[thm]{Corollary}
\theoremstyle{definition}
\newtheorem{note}[thm]{Note}
\newcommand{\R}{\mathbf{R}}
\newcommand{\ol}{\overline}
\newcommand{\C}{\mathcal{C}}
\renewcommand{\S}{\mathbf{S}}
\renewcommand{\l}{\langle}
\renewcommand{\r}{\rangle}
\title[]{Torsion of locally convex curves}
\author{Mohammad Ghomi}
\address{School of Mathematics, Georgia Institute of Technology,
Atlanta, GA 30332}
\email{ghomi@math.gatech.edu}
\urladdr{www.math.gatech.edu/$\sim$ghomi}
\date{\today \,(Last Typeset)}
\subjclass[2000]{Primary: 53A04,  53A05; Secondary: 52A15, 53C45; }
\keywords{Torsion, vertex, inflection,  osculating plane, tennis ball theorem.}
\thanks{Research of the  author was supported in part by NSF Grants DMS--1308777 and DMS -1711400.}
\begin{document}

\begin{abstract}
We show that the torsion of any simple closed curve $\Gamma$ in Euclidean 3-space changes sign at least $4$ times provided that it is star-shaped and locally convex with respect to a point $o$ in the interior of its convex hull. The latter condition means that through each point $p$ of $\Gamma$ there passes a  plane $H$, not containing $o$, such that a neighborhood of $p$ in $\Gamma$ lies on the same side  of $H$ as does $o$. This generalizes the four vertex theorem of Sedykh for convex space curves. Following Thorbergsson and Umehara, we reduce the proof to the result of Segre on inflections of spherical curves, which is also known as Arnold's tennis ball theorem.
\end{abstract}

\maketitle

\section{Introduction}
In 1992 Sedykh \cite{sedykh:originalvertex, sedykh:vertex} generalized the classical four vertex theorem of planar curves by showing that the torsion of any closed space curve  vanishes at least $4$ times, if it lies on a convex surface, see Note \ref{note:4vertex}. Recently the author \cite{ghomi:rosenberg} extended Sedykh's theorem to curves which lie on a \emph{locally} convex simply connected surface. In this work we prove another generalization of Sedykh's theorem which does not require the existence of any underlying surface for the curve. 

To state our result, let us recall that 
a set $X$ in Euclidean space $\R^3$ is \emph{star-shaped} with respect to a point $o$ if no ray emanating from $o$ intersects $X$ in more than one point. Further let us say that $X$ is \emph{locally convex} with respect to $o$ if through  every point $p$ of $X$ there passes a plane $H$, not containing $o$, such that a neighborhood of $p$ in $X$ lies on the same side of $H$ as does $o$. For instance, the boundary of any convex body in $\R^3$ is both star-shaped and locally convex with respect to any of its interior points. In this paper we show:

\begin{thm}\label{thm:main}
Let $\Gamma\subset\R^3$ be a simple closed $\C^3$ immersed curve with nonvanishing curvature. Suppose that $\Gamma$ is star-shaped and locally convex with respect to a point  $o$ in the interior of its convex hull. Then the torsion of $\Gamma$ changes sign at least $4$ times.
\end{thm}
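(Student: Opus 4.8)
The plan is to follow the strategy hinted at in the abstract: reduce the torsion problem to a statement about inflections of a spherical curve, and then invoke Segre's theorem (Arnold's tennis ball theorem). The key observation is that the sign of the torsion of $\Gamma$ is governed by the behavior of the osculating planes, and more precisely by how the binormal indicatrix (the curve traced by the binormal vector $\mathbf{b}$ on the unit sphere $\S^2$) winds and inflects. Recall that torsion vanishes precisely where the osculating plane is stationary, i.e.\ where the binormal indicatrix has a critical point, and the torsion changes sign exactly at the inflections (geodesic-curvature zeros) of a suitably associated spherical curve. So the first step is to set up the correspondence between sign changes of torsion along $\Gamma$ and inflections of a spherical curve, following the Thorbergsson--Umehara framework.

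The main construction I would carry out is a \emph{central projection} adapted to the star-shaped and locally convex hypotheses. Since $\Gamma$ is star-shaped with respect to the interior point $o$, I can place $o$ at the origin and define the radial (gnomonic-type) projection $\gamma(t) = \Gamma(t)/|\Gamma(t)|$, sending $\Gamma$ to an immersed closed curve $\gamma$ on $\S^2$. The star-shaped condition guarantees that $\gamma$ is simple (no ray through $o$ meets $\Gamma$ twice), hence $\gamma$ is a simple closed spherical curve. The local convexity with respect to $o$ should translate into $\gamma$ being locally convex on the sphere, meaning it bounds locally on one side of each of its tangent great circles; equivalently, $\gamma$ has nonvanishing geodesic curvature of a fixed sign at generic points and is not contained in any hemisphere in a degenerate way. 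The crucial analytic step is to relate the torsion $\tau$ of the space curve $\Gamma$ to the geodesic curvature or the inflections of $\gamma$: I expect an identity of the form that the sign of $\tau$ matches the sign of some geometric quantity of $\gamma$ (an inflection of $\gamma$, or of its tangent indicatrix, occurring exactly where $\tau$ changes sign). This is where the hypothesis that $o$ lies in the interior of the convex hull is used to ensure $\gamma$ is not contained in any open hemisphere, which is the hypothesis needed to apply the tennis ball theorem.

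Once the correspondence is established, the final step is to apply Segre's theorem: a simple closed curve on $\S^2$ that is not contained in any closed hemisphere (equivalently, whose geodesic curvature integral and configuration force it to oscillate) must have at least four geodesic inflections. Transporting these four inflections of $\gamma$ back through the correspondence yields at least four sign changes of the torsion of $\Gamma$, which is exactly the desired conclusion. The role of the convex-hull hypothesis is to verify the non-hemisphere condition, which is the precise input Segre's theorem requires.

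The hardest part, I expect, will be the analytic translation in the middle step: proving rigorously that sign changes of the torsion of $\Gamma$ correspond bijectively (or at least surjectively, counted with the correct parity) to inflections of the associated spherical curve $\gamma$, under only the local convexity assumption rather than global convexity. The subtlety is that local convexity with respect to $o$ is weaker than $\Gamma$ lying on a convex surface, so I cannot directly appeal to properties of a global convex body; instead I must extract from the pointwise local-convexity planes enough control on the osculating planes of $\Gamma$ to guarantee that $\gamma$ inherits the spherical convexity needed for Segre's theorem to apply and for inflections to correspond to torsion sign changes. Handling the possible degeneracies of the projection and verifying the non-hemisphere condition from the convex-hull hypothesis will require care.
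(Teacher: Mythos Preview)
Your overall framework (radial projection to $\S^2$, then Segre/tennis-ball) matches the paper, but two concrete steps are wrong or missing, and without them the argument does not close.

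First, the role of local convexity is not what you state. Local convexity with respect to $o$ does \emph{not} make the projected curve $\gamma=\overline\Gamma$ locally convex on the sphere; indeed $\overline\Gamma$ must have inflections (points where its geodesic curvature vanishes), and these inflections are precisely what the tennis-ball theorem counts. What local convexity actually buys is an \emph{orientation} statement at each inflection $\overline p$ of $\overline\Gamma$: the osculating plane $\overline\Pi_{\overline p}$ coincides with the osculating plane $\Pi_p$ of $\Gamma$ at the preimage $p$ (this part uses only that $o\in\overline\Pi_{\overline p}$), and moreover the binormals satisfy $B(p)=\overline B(\overline p)$, not merely $B(p)=\pm\overline B(\overline p)$. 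The sign agreement is exactly where the local support plane through $p$ not containing $o$ is used: it forces $\langle p-o,N(p)\rangle<0$ at such points, which fixes the sign.

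Second, there is no pointwise identity of the form ``$\tau$ changes sign where $\overline\Gamma$ inflects''. The paper's mechanism is interval-based. After perturbing $o$ (Sard) so that $\overline\Gamma$ has only finitely many inflections, take two consecutive \emph{genuine} inflections $\overline p_0,\overline p_1$. A maximum principle for geodesic curvature shows $[\overline p_0,\overline p_1]$ lies locally above its tangent great circles at both endpoints; via the binormal identification above, this means $[p_0,p_1]$ lies locally above $\Pi_{p_0}$ and above $\Pi_{p_1}$. A maximum principle for torsion then forces $\tau>0$ somewhere near $p_0$ and $\tau<0$ somewhere near $p_1$ in $(p_0,p_1)$, giving a sign change on each of the (at least four) arcs. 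Your proposal does not contain this maximum-principle step, and the ``bijective/surjective correspondence'' you hope for does not exist in the form you describe.
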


In particular,  if $\Gamma$ lies on  the boundary of a convex body, then it immediately follows that $\Gamma$ has at least $4$ points of vanishing torsion, which is Sedykh's result. The above theorem also generalizes a similar result of Thorbergsson and Umehara \cite[Thm. 0.2]{thorbergsson&umehara}, see Note \ref{note:TU2}.

The general strategy for proving Theorem \ref{thm:main} hinges on the fact that at a point of nonvanishing torsion $p$, the torsion $\tau$ of $\Gamma$ is positive (resp. negative) if and only if $\Gamma$ crosses its osculating plane at $p$ in the direction (resp. opposite direction) of the binormal vector $B(p)$. To exploit this phenomenon, we project $\Gamma$ into a sphere $S$ centered at $o$ to obtain a simple closed  curve $\ol\Gamma$ which contains $o$ in its convex hull. By a result of Segre \cite{segre:tangents, weiner:inflection, ghomi:verticesC}, also known as Arnold's tennis ball theorem, $\ol\Gamma$ has at least $4$ inflections $\ol p_i$. It turns out that  the osculating planes $\ol\Pi_{\ol p_i}$ of $\ol\Gamma$ coincide with the osculating planes $\Pi_{p_i}$ of $\Gamma$ at $p_i$, where $p_i$ are the preimages of $\ol p_i$, see Figure \ref{fig:projection}(a). Further, the local convexity assumption will  ensure that the binormal vectors of these planes are parallel, i.e., $\ol B(\ol p_i)=B(p_i)$. So the local position of $\Gamma$ with respect to $\Pi_{p_i}$ mirrors that of $\ol\Gamma$ with respect to $\ol\Pi_{\ol p_i}$. After a perturbation of $o$, we may assume that $\tau(p_i)\neq 0$ and $\ol p_i$ are \emph{genuine} inflections, i.e.,  the geodesic curvature of $\ol\Gamma$ changes sign at $\ol p_i$. Further, it is easy to see that at every pair of consecutive genuine inflections, $\ol\Gamma$ crosses its osculating planes in opposite directions with respect to $\ol B$, because $\ol B$ can never be orthogonal to $S$, see Figure \ref{fig:projection}(b).
Thus it follows that $\tau$ changes sign at least once within each of the $4$ intervals of $\Gamma$ determined by $p_i$.
 \begin{figure}[h]
   \centering
    \begin{overpic}[height=1.75in]{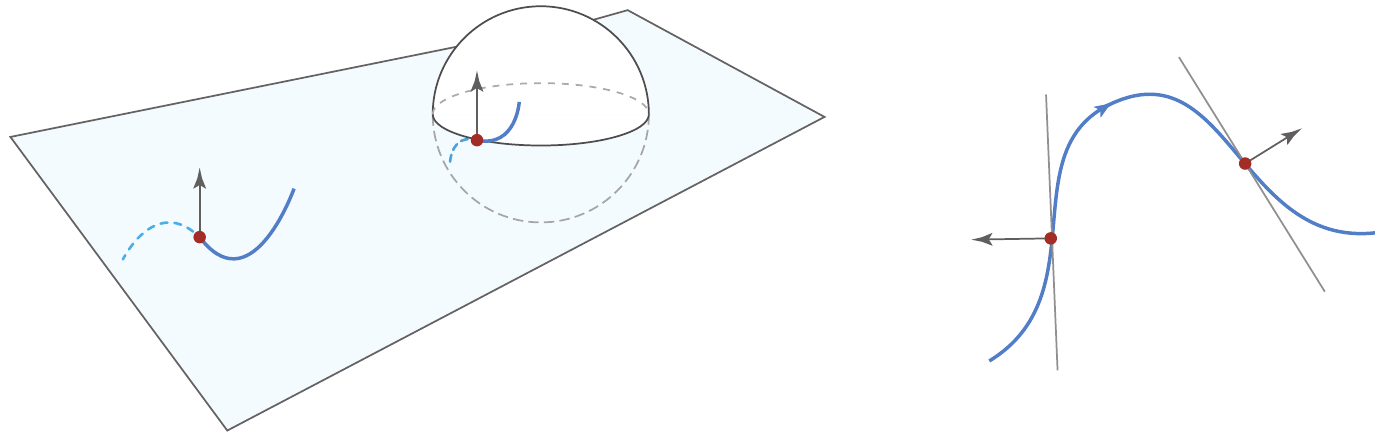}
         \put(13.5,12.5){\small $p_i$}
         \put(13,20){\small $B(p_i)$}
         \put(22,16){\small $\Gamma$}
          \put(34.5,19){\small $\ol p_i$}
         \put(33,27){\small $\ol B(\ol{p}_i)$}
         \put(38.5,23){\small $\ol\Gamma$}
         \put(30,2){\small (a)}
         \put(81,2){\small (b)}
         \put(13.5,6){$\Pi_{p_i}=\ol\Pi_{\ol{p}_i}$}
                    \put(81,25.5){\small $\ol\Gamma$}
            \put(94,23){\small $\ol{B}$}
           \put(68,14){\small $\ol{B}$}

    \end{overpic}
    \caption{}\label{fig:projection}
\end{figure}

The above approach for studying the sign of torsion is due to Thorbergsson and Umehara \cite[p. 240]{thorbergsson&umehara}, who in turn attribute the spherical projection technique to Segre \cite[p. 258]{segre1968alcune}; however, the method of Thorbergsson and Umehara does not quite lead to a generalization of Sedykh's theorem to a class of nonconvex curves, as we describe in Note \ref{note:TU} below. 

Next we discuss some examples which validate Theorem \ref{thm:main}.  Figure \ref{fig:curls}(a) shows a curve which is star-shaped and locally convex, but is not convex. Thus Theorem \ref{thm:main} is  indeed a nontrivial generalization of Sedykh's result.  Figure \ref{fig:curls}(b), which shows a torus curve of type $(1, 7)$, demonstrates that the star-shaped assumption by itself would not be sufficient in Theorem \ref{thm:main}. Indeed all torus curves of type $(1,n)$ are star-shaped with respect to any point on their axis of symmetry, and Costa \cite{costa:twisted} has shown that, for $n\geq 2$, they may be realized with nonvanishing torsion if the underlying torus is sufficiently thin. Finally Figure \ref{fig:curls}(c) shows that the local convexity by itself is not sufficient either. This figure depicts a spherical curve with only two extrema of geodesic curvature and hence only two sign changes of torsion (the torsion of a spherical curve vanishes precisely when its geodesic curvature has a local extremum).

 \begin{figure}[h]
   \centering
    \begin{overpic}[height=1in]{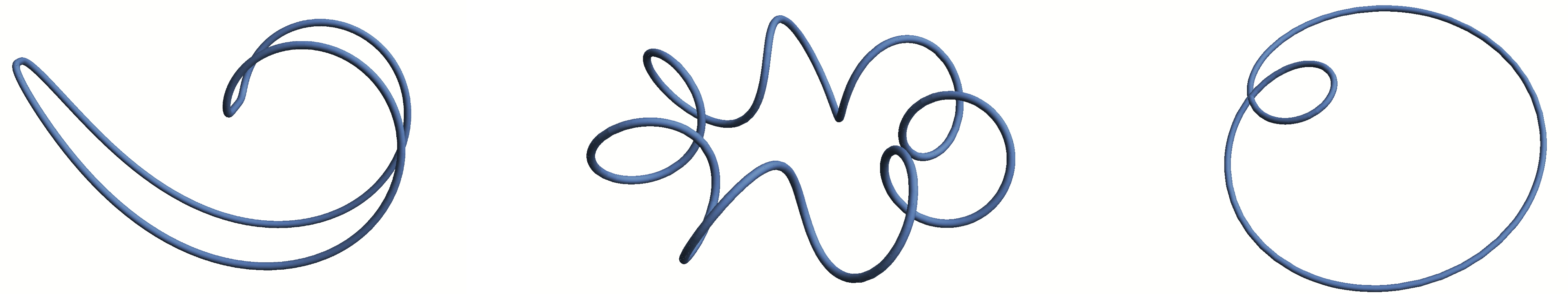}
    \put(10,-2){\small (a)}
    \put(45,-2){\small (b)}
    \put(78,-2){\small (c)}
        \end{overpic}
    \caption{}\label{fig:curls}
\end{figure}

Four vertex theorems have had a multifaceted and interesting history, with unexpected applications, since Mukhopadhyaya proved the first version of this phenomenon in 1909. For extensive background and more references see \cite{ghomi:rosenberg, gluck:notices, ovsienko&tabachnikov}. In particular see \cite{bray&jauregui} for some recent applications in General Relativity, and \cite{sedykh1996, sedykh1997} for discrete versions.

\begin{note}\label{note:4vertex}
The classical four vertex theorem  states that the curvature of any simple closed curve in $\R^2$ has at least $4$ critical points, which are called vertices.  A point of the curve is a vertex if and only if the osculating circle at that point has contact of order $3$ with the curve. Consequently, the geodesic curvature of simple closed curves on the sphere also satisfies the four vertex theorem, because the stereographic projection preserves circles. Further note that the plane which contains the osculating circle of a spherical curve is actually the osculating plane of the curve. Thus at a vertex, a spherical curve has contact of order $3$ with its osculating plane, which means that the torsion at that point vanishes. Hence all simple closed spherical curves have at least $4$ points of vanishing torsion. Sedykh generalized this result to curves lying on any convex surface. It is in this sense that Sedykh's theorem, and more generally our Theorem \ref{thm:main}, are extensions of the classical four vertex theorem.
\end{note}

\begin{note}\label{note:TU}
For convex space curves, a refinement of Sedykh's four vertex theorem is proved by  Thorbegsson and Umehara  in \cite[Thm. 0.1]{thorbergsson&umehara}. Furthermore,  these authors \cite[Thm. 0.2]{thorbergsson&umehara} obtain  a $4$ vertex result for space curves $\Gamma\subset \R^3$ which are star-shaped with respect to a point $o$ in the interior of their convex hull, have no tangent line passing through $o$, and further satisfy the property that for all points $p\in\Gamma$ the angle between the principal normal $N(p)$ of $\Gamma$ and the position vector $p-o$ is obtuse, i.e., 
\begin{equation}\label{eq:N}
\langle p-o, N(p) \rangle <0.
\end{equation}
They claim that this result implies Sedykh's theorem, because a ``convex space curve $\gamma$ satisfies the conditions in Theorem 0.2" \cite[p. 230]{thorbergsson&umehara}; however, this is not the case. Indeed there exists a simple closed curve  which lies on the boundary of a convex body,  but does not satisfy the condition \eqref{eq:N} for any point $o$; see Figure \ref{fig:apple}.
  \begin{figure}[h]
   \centering
    \begin{overpic}[height=1.1in]{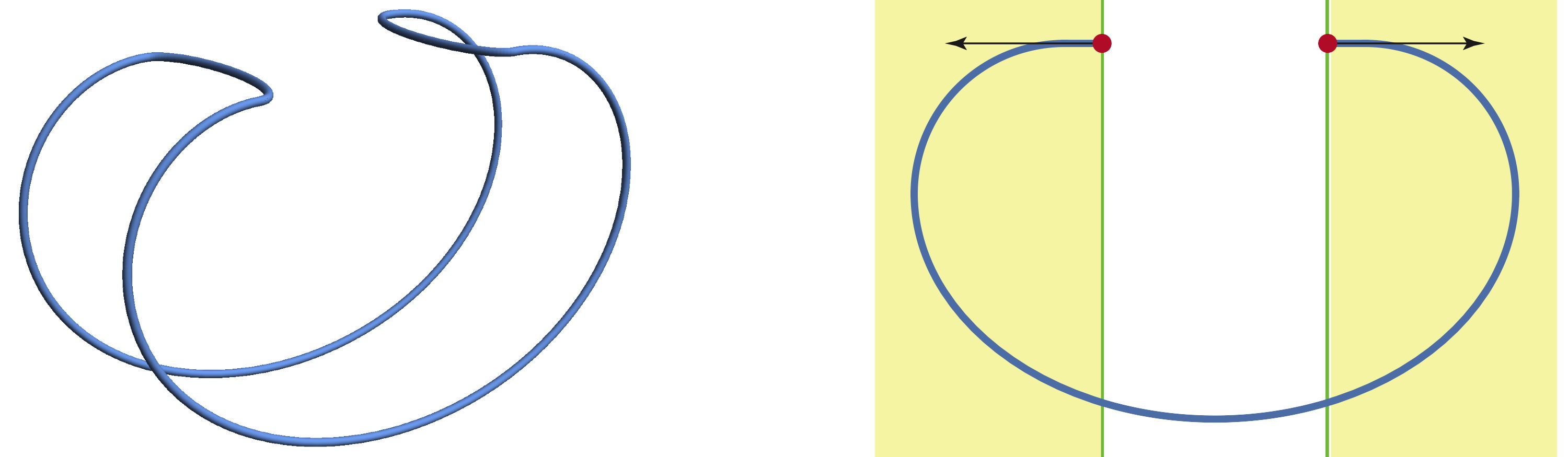}
    \put(71.5,26){\small$p_1$}
    \put(80,26){\small$p_2$}
     \put(57,1.5){\small$H_1^+$}
      \put(94,1.5){\small$H_2^+$}
    \end{overpic}
    \caption{}\label{fig:apple}
\end{figure}
The left diagram here depicts the curve,  and the right diagram shows its projection into its plane of symmetry. Let $p_i$, $i=1$, $2$ be the intersections of the curve with it symmetry plane. Note that at these points the principal normals $N(p_i)$ are antiparallel. Let $H_i$  be the planes orthogonal to $N(p_i)$ which pass through $p_i$, and $H_i^+$ be the corresponding (closed) half-spaces into which $N(p_i)$ point. Note that each $H_i^+$ consists of the set of all points $o$ such that 
$\l p_i-o, N(p_i) \r \leq 0$. But $H_1$ and $H_2$ are disjoint. So there exists no point $o$ with respect to which $\Gamma$ can satisfy condition \eqref{eq:N}. Thus Theorem 0.2 in \cite{thorbergsson&umehara} does not imply Sedykh's theorem.
\end{note}

\begin{note}\label{note:TU2}
The result of Thorbergsson and Umehara  \cite[Thm. 0.2]{thorbergsson&umehara}  mentioned in the previous note is a special case of Theorem \ref{thm:main}. Indeed let $H_p$ be the rectifying plane of $\Gamma$ at $p$, i.e., the plane which passes through $p$ and is orthogonal to the principal normal $N(p)$. Then \eqref{eq:N} implies that $N(p)$ points to the side of $H$ which contains $o$. Consequently a neighborhood of $p$ in $\Gamma$ lies on the same side as well, which establishes the local convexity of $\Gamma$.
\begin{comment}
 (let $\gamma\colon (-\epsilon,\epsilon)\to\Gamma$ be a local unit speed parametrization with $\gamma(0)=p$, and $f(t):=\langle \gamma(t)-p,N(p)\rangle$; by Taylor's theorem, $f(t)=\|\gamma''(s)\|\langle N(s), N(0)\rangle t^2/2$ for some $s\in(0,t)$, which yields $f\geq 0$ for small $t$)
\end{comment}
\end{note}

\section{Basic Notation and Terminology}
Throughout this work we assume that $\Gamma$, $o$ are as in Theorem \ref{thm:main}. In particular $\Gamma$ has nonvanishing curvature (so that its torsion is well defined). For convenience we also assume that $o$ is the origin of $\R^3$. Further we let $\ol\Gamma$ denote the radial projection of $\Gamma$ into the unit sphere $\S^2$ centered at $o$. For every point $p\in\Gamma$, $\ol p:=p/\|p\|$ denotes the corresponding point of $\ol\Gamma$. We assume that $\Gamma$ is oriented, and let $T$, $N$, $B:=T\times N$, denote the corresponding unit tangent, principal normal, and the binormal vectors of $\Gamma$ respectively. For every point $p\in\Gamma$ there exists a $(\C^3)$ unit speed parametrization $\gamma\colon (-\epsilon,\epsilon)\to \Gamma$ with $\gamma(0)=p$ such that $\gamma'(0)=T(p)$. Then $N(p):=\gamma''(0)/\|\gamma''(0)\|$, and  the torsion of $\Gamma$ is given by 
$$
\tau(p):=\frac{\langle \gamma'''(0),B(p)\rangle}{\|\gamma''(0)\|}.
$$
 The osculating plane $\Pi_p$ of $\Gamma$ at $p$ is the plane which passes through $p$ and is orthogonal to $B(p)$. We let $\ol\gamma$, $\ol T$, $\ol N$, $\ol B$,   $\ol \Pi$ denote the corresponding quantities for $\ol\Gamma$. More specifically, $\ol\gamma:=\gamma/\|\gamma\|$, $\ol T:=\ol\gamma'/\|\ol\gamma'\|$, $\ol B:=\ol\gamma'\times\ol\gamma''/\|\ol\gamma'\times\ol\gamma''\|$, and $\ol N:=\ol B\times \ol T$. In particular note that these quantities are well defined, since by assumption through each point of $\Gamma$ there passes a local support plane of $H$ not containing $o$. Consequently the tangent lines of $\Gamma$ do not pass through $o$, since they  lie in $H$. So $\|\ol\gamma'\|\neq 0$.
 Further note that $\ol\Gamma$ inherits its orientation from $\Gamma$.
An \emph{inflection point} of $\ol\Gamma$ is a point where the geodesic curvature $\ol k$ of $\ol\Gamma$ in $\S^2$ vanishes. Here we define $\ol k$ with respect to the conormal vector $\ol n(\ol p):=\ol p\times \ol T(\ol p)$:
$$
\ol k(\ol p):=\langle \ol N(\ol p),\ol n(\ol p)\rangle.
$$
We say that an inflection point $\ol p$ is genuine if $\ol k$ changes sign at $\ol p$.

\section{Osculating Planes and Inflections}
A key part of the proof of Theorem \ref{thm:main}, which facilitates its reduction to Segre's tennis ball theorem, is the following observation. The first part of this lemma is trivial, the second part goes back to Segre, and the third part is a consequence of our local convexity assumption. 
\begin{lem}\label{lem:osculate}
Let $\ol p$ be an inflection point of $\ol\Gamma$. Then 
\begin{enumerate}
\item[(i)] $o\in\ol\Pi_{\ol p}$, 
\item[(ii)] $\ol\Pi_{\ol p}=\Pi_p$, 
\item[(iii)] $B(p)= \ol B(\ol p)$.
\end{enumerate}
\end{lem}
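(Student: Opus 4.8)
My plan is to treat the three claims in order, working throughout in the orthonormal radial frame $\{\ol\gamma,\ol T,\ol n\}$, which is available because $\ol\gamma$ lies on $\S^2$ (so $\ol\gamma\perp\ol\gamma'\parallel\ol T$) and $\ol n=\ol\gamma\times\ol T$. For (i) I would first record the identity $\ol k=\langle\ol B,\ol\gamma\rangle$. Indeed, writing $\ol k=\langle\ol N,\ol n\rangle=\langle\ol B\times\ol T,\,\ol\gamma\times\ol T\rangle$ and applying the Lagrange identity, the orthogonalities $\ol B\perp\ol T$ and $\ol T\perp\ol\gamma$ collapse the right side to $\langle\ol B,\ol\gamma\rangle\langle\ol T,\ol T\rangle=\langle\ol B,\ol\gamma\rangle$. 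Hence at an inflection $\langle\ol B,\ol\gamma\rangle=\ol k=0$, and since $o$ is the origin and $\ol\Pi_{\ol p}$ is the plane through $\ol p=\ol\gamma$ orthogonal to $\ol B$, this says exactly that $o\in\ol\Pi_{\ol p}$.

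For (ii) the idea is to transport the second-order contact of $\ol\Gamma$ with its osculating plane back to $\Gamma$. Fix the constant vector $\ol B=\ol B(\ol p)$, put $r(t)=\|\gamma(t)\|$ and $h(t)=\langle\ol\gamma(t),\ol B\rangle$, so that $\langle\gamma(t),\ol B\rangle=r(t)\,h(t)$. Because $\ol\Pi_{\ol p}$ is the osculating plane of $\ol\Gamma$ and, by (i), passes through $o$, the function $h$ vanishes to second order at $t=0$: $h(0)=0$ by (i), $h'(0)=0$ since $\ol\gamma'\parallel\ol T\perp\ol B$, and $h''(0)=0$ since $\ol\gamma''\perp\ol B$ by the very definition of $\ol B$. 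The Leibniz rule then forces $\langle\gamma(t),\ol B\rangle$ to vanish to second order as well, and reading off the derivatives at $t=0$ yields $\langle p,\ol B\rangle=\langle T,\ol B\rangle=\kappa\langle N,\ol B\rangle=0$. Thus $\ol B\perp T$ and $\ol B\perp N$, so $\ol B=\pm B$; moreover $\langle p,\ol B\rangle=0$ shows $p\in\ol\Pi_{\ol p}$. Since $\Pi_p$ and $\ol\Pi_{\ol p}$ then share the point $p$ and have parallel normals, they coincide.

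The crux is (iii): ruling out $\ol B=-B$, and this is where local convexity enters. I would argue inside the common plane $\Pi_p=\ol\Pi_{\ol p}$, where $\{T,N\}$ is positively oriented (with $T\times N=B$) and, by (ii), $p\in\Span\{T,N\}$, say $p=\|p\|(\cos\alpha\,T+\sin\alpha\,N)$. Two sign facts combine. First, a spherical curve always bends toward its center: differentiating $\langle\ol\gamma',\ol\gamma\rangle=0$ gives $\langle\ol\gamma'',\ol\gamma\rangle=-\|\ol\gamma'\|^2<0$, and since $\ol N$ is a positive multiple of $\|\ol\gamma'\|^2\ol\gamma''-\langle\ol\gamma',\ol\gamma''\rangle\ol\gamma'$, we get $\langle\ol N,\ol\gamma\rangle<0$, so $\ol N=-\ol\gamma$ at the inflection. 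Second, local convexity yields $\langle p,N\rangle<0$: a support plane at $p$ on the $o$-side has unit normal $\nu$ pointing toward $o$ with $\nu\perp T$, $\langle N,\nu\rangle\ge0$, and $\langle p,\nu\rangle<0$, so writing $\nu=cN+dB$ and $p=aT+bN$ gives $bc<0$ with $c\ge0$, whence $b=\langle p,N\rangle<0$, i.e. $\sin\alpha<0$.

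To close (iii) I would expand $\ol\gamma'(0)=(T-r'\ol\gamma)/r$ using $r'(0)=\langle\ol\gamma,T\rangle=\cos\alpha$, obtaining $\ol T=\operatorname{sign}(\sin\alpha)(\sin\alpha\,T-\cos\alpha\,N)$; computing $\ol B=\ol T\times\ol N$ with $\ol N=-\ol\gamma$ then gives $\ol B=-\operatorname{sign}(\sin\alpha)\,B$, which equals $B$ precisely because $\sin\alpha<0$. I expect this sign bookkeeping—orienting $\ol T$ correctly and locating $\nu$ correctly relative to $N$ and $p$—to be the main obstacle, since it is the only place where one must combine the spherical geometry of $\ol\Gamma$ with the local convexity hypothesis; parts (i) and (ii), by contrast, reduce to the Lagrange identity and a single contact-order computation.
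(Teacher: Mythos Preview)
Your proof is correct and follows essentially the same architecture as the paper's: part (i) is immediate once one sees that the inflection condition forces $\ol B\perp\ol p$ (equivalently $\ol N=-\ol p$); part (ii) shows $\ol B\perp\gamma'(0),\gamma''(0)$; and part (iii) extracts the key inequality $\langle p,N\rangle<0$ from the local support plane and uses it to fix the sign $\ol B=+B$.

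Where you differ is mainly in packaging. For (i) you use the Lagrange identity to get $\ol k=\langle\ol B,\ol\gamma\rangle$ directly, whereas the paper argues via $\ol N(\ol p)=-\ol p$. For (ii) your contact-order argument---observing that $h(t)=\langle\ol\gamma(t),\ol B\rangle$ vanishes to second order and then multiplying by $r(t)$---is cleaner than the paper's route, which computes $\ol\gamma\times\ol\gamma'$ and $\ol\gamma\times\ol\gamma''$ explicitly in terms of $\gamma$ and then chases inner products. For (iii) both arguments produce $\langle p,N\rangle<0$ from the support plane; the paper then writes $\gamma(0)=a\gamma'(0)+b\gamma''(0)$ with $b<0$ and takes a single cross product, while you parametrize $p=\|p\|(\cos\alpha\,T+\sin\alpha\,N)$ and compute $\ol T,\ol N,\ol B$ in the $\{T,N\}$ frame. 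These are equivalent bookkeeping choices; your version makes the sign dependence on $\sin\alpha$ very explicit, at the cost of one more line of computation.
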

\begin{proof}
The argument is presented in three parts corresponding to each of the items enumerated above. Here for any pair of vector $v$, $w$, we use the notation $v\parallel w$ to indicate that $v=\lambda w$ for some $\lambda>0$.

(\emph{i}) 
The point $\ol p$ is an inflection if and only if $\ol N(\ol p)$ is orthogonal to $\S^2$, or
\begin{equation}\label{eq:olN}
\ol N(\ol p)=-\ol p.
\end{equation}
 Thus $o=\ol p+\ol N(\ol p)\in\ol \Pi_{\ol p}$, as claimed. 

(\emph{ii}) 
Since $\ol\Pi_{\ol p}$ passes through $\ol p$ and $o$, it contains $p$ as well. So it suffices to check that $\ol\Pi_{\ol p}$ and $\Pi_p$ are parallel, or that $\ol B(\ol p)$ is orthogonal to $\Pi_p$. To this end let $\gamma\colon (-\epsilon, \epsilon)\to\Gamma$ be a  local parametrization with $\gamma(0)=p$ and $\gamma'(0)\parallel T(p)$. Then $\ol\gamma:=\gamma/\|\gamma\|$ yields a local parametrization for $\ol\Gamma$ with $\ol\gamma(0)=\ol p$ and $\ol\gamma'(0)\parallel \ol T(\ol p)$.  We need to check that $\ol B(\ol p)$ is orthogonal to both $\gamma'(0)$ and $\gamma''(0)$. Simple computations show that
\begin{equation}\label{eq:olgamma1}
\ol\gamma\times \ol\gamma'=\frac{\gamma\times \gamma'}{\|\gamma\|^2},
\end{equation}
\begin{equation}\label{eq:olgamma2}
\ol\gamma\times\ol\gamma''=\frac{(\gamma\times\gamma'')\|\gamma\|^2-2(\gamma\times\gamma')\l\gamma,\gamma'\r}{\|\gamma\|^4}.
\end{equation}
Using \eqref{eq:olN} and \eqref{eq:olgamma1}, we have
\begin{equation}\label{eq:olB}
\ol B(\ol p)=\ol T(\ol p)\times \ol N(\ol p)=\ol p\times \ol T(\ol p)=\ol\gamma(0)\times \ol\gamma'(0)
=\frac{\gamma(0)\times \gamma'(0)}{\|\gamma(0)\|^2}.
\end{equation}
Thus
$
\langle \gamma'(0), \ol B(\ol p)\rangle=0.
$
Next, to compute $\l\gamma''(0), \ol B(\ol p)\r$, we may assume that $\ol\gamma$ has unit speed. Then $\ol N(\ol p)\parallel \ol\gamma''(0)$ and \eqref{eq:olN} yields that 
$$
\ol\gamma(0)\times \ol\gamma''(0)\parallel  \ol p\times \ol N(\ol p)\parallel \ol p\times(-\ol p)=0.
$$
Consequently, \eqref{eq:olgamma2} yields that $\gamma(0)\times\gamma''(0)=\alpha\; \gamma(0)\times\gamma'(0)$, for some constant $\alpha$. Now
using \eqref{eq:olB} we have, 
$$
\|\gamma(0)\|^2\l\gamma''(0), \ol B(\ol p)\r=\l\gamma'(0),\gamma''(0)\times\gamma(0)\r=
\alpha\l\gamma'(0),\gamma'(0)\times\gamma(0)\r=0,
$$
as desired. 

(\emph{iii})
We may assume that $\gamma$ has unit speed. Then $N(p)\parallel \gamma''(0)$.
Consequently,
$$
B(p)=T(p)\times N(p)\parallel \gamma'(0)\times \gamma''(0).
$$
This together with \eqref{eq:olB} shows that $B(p)\parallel\ol B(\ol p)$ if and only if
$$
\gamma'(0)\times\gamma''(0)\parallel \gamma(0)\times \gamma'(0).
$$
By assumption there exists a local support plane $H$ of $\Gamma$ passing through $p$.   Let $L$ be the line given by $H\cap \Pi_p$ ($L$ is well defined because $o\in \Pi_p$ by parts $(i)$ and $(ii)$ above, but $o\not\in H$ by assumption; thus $H\neq\Pi_p$). Let $L^+$ denote the side of $L$ in $\Pi_p$ which contains $o$. Then $N(p)$ points into $L^+$, see Figure \ref{fig:planes}; because by assumption $\Gamma$ lies locally on the side of $H$ which contains $o$, and thus $N(p)$ must point into this side as well. 
  \begin{figure}[h]
   \centering
    \begin{overpic}[height=1.8in]{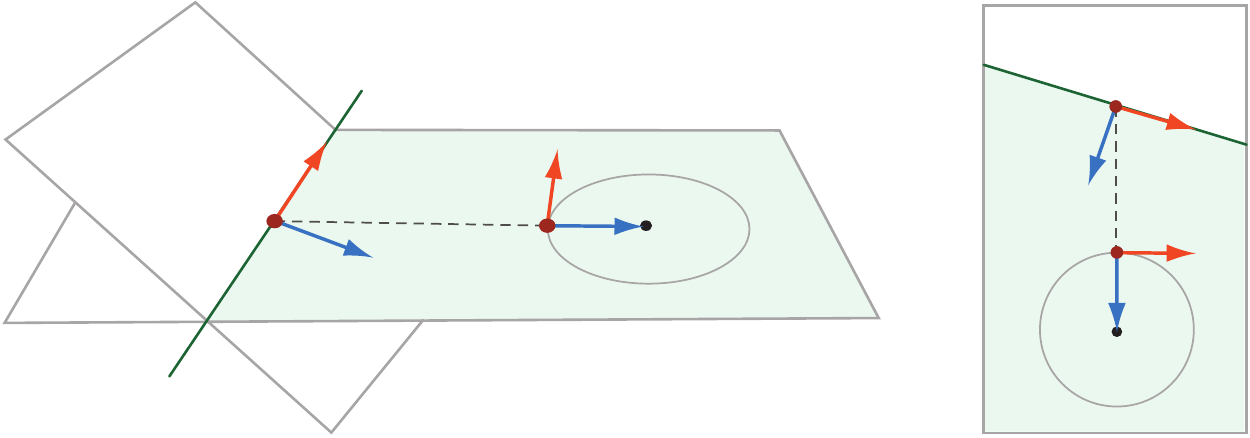}
    \put(52.5,16){\small$o$}
    \put(42,14){\small$\ol{p}$}
    \put(21,14){\small$p$}
    \put(11,3){\small$L$}
     \put(6,23){\small$H$}
     \put(63,11){\small$L^+$}
     \put(76,29){\small$L$}
      \put(19,23.5){\small$T(p)$}
       \put(43,24){\small$\ol T(\ol p)$}
       \put(27,11.5){\small$N(p)$}
       \put(50,13){\small$\ol N(\ol p)$}
       \put(0,11){\small$\Pi_{p}=\ol\Pi_{\ol{p}}$}
       \put(89, 6.5){\small$o$}
        \put(89, 16.5){\small$\ol p$}
         \put(89, 28.5){\small$p$}
         \put(79, 1){\small$L^+$}
         \put(93.5, 26.5){\small$T(p)$}
         \put(94, 16){\small$\ol T(p)$}
         \put(80.5, 22){\small$N(p)$}
         \put(82, 9){\small$\ol N(p)$}
       \end{overpic}
    \caption{}\label{fig:planes}
\end{figure}
 Further, $L$ is tangent to $\Gamma$ since $H$ is tangent to $\Gamma$. Thus $N(p)\perp L$. Consequently 
 $$
 L^+=\{ x\in \Pi_p\mid \l\ x-p,N(p)\r\geq 0\}.
 $$  
 So since $o$ lies in the interior of $L^+$, $\l o-p, N(p)\r>0$, or
 $
\langle N(p), p\rangle <0,
$
 which  yields
\begin{equation}\label{eq:a}
 \langle \gamma''(0), \gamma(0)\rangle <0.
\end{equation}
 Since $\Pi_p$ passes through $o$,  $\{\gamma(0),\gamma'(0),\gamma''(0)\}$ is linearly dependent.
 Further, since we are assuming that $\gamma$ has unit speed, $\gamma'\perp\gamma''$. Thus 
 $$
 \gamma(0)=a \gamma'(0)+b\gamma''(0),
 $$
 where $b=\langle \gamma''(0), \gamma(0)\rangle/\|\gamma''(0)\|^2<0$ according to \eqref{eq:a}.
 So
 $$
 \gamma(0)\times\gamma'(0)=-b\;\gamma'(0)\times\gamma''(0),
 $$
 which completes the proof.
\end{proof}

\section{Maximum Principles for Torsion and Geodesic Curvature}

Here we collect the facts we need concerning the relation between the sign of torsion of a space curve, and its relative position with respect to its osculating plane. Further we discuss the corresponding facts for the geodesic curvature of spherical curves, which will be used in the proof of our main result in the next section.

We start with torsion. Here by the region \emph{above} the osculating plane we mean the (closed) half-space into which the binormal vector $B$ points, and the region \emph{below} will be the other half-space. Since we assume that $\Gamma$ is oriented, for every pair of points $p$, $q$ of $\Gamma$, there is a unique choice of a segment with initial point $p$ and final point $q$ which we denote by $[p,q]$. The interior of this segment will be denoted by $(p,q)$.

\begin{lem}[Lem. 6.12, \cite{ghomi:rosenberg}]\label{lem:maxprincipletorsion}
Suppose that $\tau\geq  0$ (resp. $\tau\leq 0$) on a segment $[p, q]$ of $\Gamma$. Then, near $p$,  the segment lies above (resp. below) the osculating plane of $\Gamma$ at $p$.\qed
\end{lem}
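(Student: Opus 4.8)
The plan is to reduce the statement to the sign of a single scalar function and to control that sign through the Frenet equations. Fix a unit-speed local parametrization $\gamma\colon(-\epsilon,\epsilon)\to\Gamma$ with $\gamma(0)=p$ and $\gamma'(0)=T(p)$, oriented so that increasing the parameter moves along $[p,q]$. Since ``above'' means the side into which $B(p)$ points, it suffices to show that the function $f(s):=\langle \gamma(s)-p,\,B(p)\rangle$ satisfies $f\geq 0$ for small $s>0$ in the case $\tau\geq 0$ (the case $\tau\leq 0$ following by the symmetric argument). The naive approach is Taylor expansion: one computes $f(0)=f'(0)=f''(0)=0$ and $f'''(0)=\kappa(0)\,\tau(p)$, so if $\tau(p)>0$ the cubic term is positive and $f>0$ near $0^+$. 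The difficulty --- and the reason the hypothesis $\tau\geq 0$ is imposed on the whole segment rather than merely at $p$ --- is the degenerate case $\tau(p)=0$, where $f'''(0)=0$ and the higher-order behavior is a priori sign-indeterminate.

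To treat all cases uniformly, I would introduce the three scalar functions $a:=\langle T,B(p)\rangle$, $g:=\langle N,B(p)\rangle$, and $b:=\langle B,B(p)\rangle$, which satisfy $a(0)=g(0)=0$, $b(0)=1$, and $f'=a$. Differentiating and using the Frenet equations $T'=\kappa N$, $N'=-\kappa T+\tau B$, $B'=-\tau N$ yields the closed system $a'=\kappa g$ and $g'=-\kappa a+\tau b$. Setting $z:=g+ia$ then collapses this to a single linear complex ODE
\[
z'=i\kappa\, z+\tau b,\qquad z(0)=0,
\]
whose solution by variation of parameters is $z(s)=\int_0^s \exp\!\bigl(i\!\int_v^s\kappa(w)\,dw\bigr)\,\tau(v)\,b(v)\,dv$. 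Taking imaginary parts gives the key representation $a(s)=\int_0^s \sin\!\bigl(\int_v^s\kappa(w)\,dw\bigr)\,\tau(v)\,b(v)\,dv$.

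With this formula the conclusion is immediate. For $s$ small enough that $\int_0^s\kappa<\pi$ and $b>0$ on $[0,s]$ --- both hold by continuity, since $b(0)=1$ and $\kappa$ is bounded --- the phase $\int_v^s\kappa$ lies in $[0,\pi)$ for every $v\in[0,s]$, so $\sin\!\bigl(\int_v^s\kappa\bigr)\geq 0$; combined with $\tau\geq 0$ and $b>0$ this forces $a(s)\geq 0$. Since $f'=a$ and $f(0)=0$, we conclude $f(s)\geq 0$ for small $s>0$, i.e.\ the segment lies above $\Pi_p$ near $p$. The case $\tau\leq 0$ is identical, giving $a\leq 0$, hence $f\leq 0$ and the segment below. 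I expect the main obstacle to be precisely the passage through the degenerate case $\tau(p)=0$; the virtue of the complex-ODE representation is that it dispenses with any case split, converting the \emph{global} sign hypothesis on $\tau$ into the pointwise positivity of $\sin$ on $(0,\pi)$.
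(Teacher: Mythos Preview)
Your argument is correct. The paper does not actually prove this lemma: the statement is imported verbatim as Lemma~6.12 of \cite{ghomi:rosenberg} and closed with a \qed, so there is no in-paper proof to compare against. Your derivation via the complex scalar ODE $z'=i\kappa z+\tau b$ for $z=g+ia$ and the resulting integral representation
\[
a(s)=\int_0^s \sin\!\Bigl(\int_v^s\kappa\Bigr)\,\tau(v)\,b(v)\,dv
\]
is a clean and self-contained way to handle the degenerate case $\tau(p)=0$ that a bare Taylor expansion cannot; the regularity available ($\Gamma\in\mathcal{C}^3$, hence $T\in\mathcal{C}^2$, $N,B\in\mathcal{C}^1$, $\kappa\in\mathcal{C}^1$, $\tau\in\mathcal{C}^0$) is exactly what the variation-of-parameters formula requires, and the positivity of $\kappa$ guarantees $\int_v^s\kappa\in(0,\pi)$ for $0\le v<s$ small, so the sine factor is nonnegative as you claim. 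One cosmetic point: you might state explicitly that $f'=a$ because $\gamma$ has unit speed, and that the choice of $\delta$ depends only on $\sup\kappa$ and on how quickly $b$ can drop below, say, $1/2$---both uniform on the compact curve---so ``near $p$'' is genuinely local.
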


The above lemma quickly yields the following converse:

\begin{cor}\label{cor:maxprincipletorsion}
Suppose that a segment $[p,q]$ of $\Gamma$ lies above its osculating  plane at $p$ (resp. $q$) and does not lie completely in the osculating plane. Then $\tau >0$ (resp. $\tau<0$) at some point of $[p,q]$.
\end{cor}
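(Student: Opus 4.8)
The plan is to prove the corollary as the contrapositive of Lemma \ref{lem:maxprincipletorsion}, exploiting the fact that the two directional variants of that lemma together cover both possible sign behaviors of $\tau$. Concretely, I would argue by contradiction. Suppose the segment $[p,q]$ lies above its osculating plane at $p$ but, contrary to the claim, $\tau \le 0$ everywhere on $[p,q]$. Then the hypothesis $\tau \le 0$ on $[p,q]$ is exactly the setting of the ``resp.'' clause of Lemma \ref{lem:maxprincipletorsion}, which concludes that near $p$ the segment lies \emph{below} the osculating plane at $p$. This would force $[p,q]$ to lie in the osculating plane in a neighborhood of $p$ (being simultaneously above and below), contradicting the standing assumption that the segment does not lie completely in the osculating plane.

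The one subtlety to handle carefully is that ``lies above'' and ``lies below'' both refer to \emph{closed} half-spaces, so their intersection is the osculating plane itself rather than the empty set; I must therefore be precise that simultaneously lying in both closed half-spaces near $p$ means lying \emph{in} the plane near $p$. First I would fix a local unit-speed parametrization $\gamma\colon(-\epsilon,\epsilon)\to\Gamma$ with $\gamma(0)=p$ and examine the signed height function $h(t):=\langle \gamma(t)-p, B(p)\rangle$ measuring position relative to $\Pi_p$. The two conclusions give $h(t)\ge 0$ and $h(t)\le 0$ for all small $t>0$, hence $h\equiv 0$ on a one-sided neighborhood; by analyticity of the contact conditions (or directly, since $\tau\le 0$ together with $h\ge0$ near $p$ pins the curve to $\Pi_p$), this propagates to force the contact to be total, i.e.\ the segment lies in $\Pi_p$ near $p$. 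To upgrade ``near $p$'' to ``completely,'' I would note that the set of $t$ for which $\gamma(t)\in\Pi_p$ is closed, and a standard connectedness argument using the maximum principle of Lemma \ref{lem:maxprincipletorsion} applied at interior points shows it is also open, so it is all of $[p,q]$ — contradicting the hypothesis.

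The $q$ case is entirely symmetric: reversing the orientation of $\Gamma$ swaps $B$ with $-B$, interchanges ``above'' and ``below,'' and replaces $\tau$ with $-\tau$, so the statement about lying above at $q$ with the conclusion $\tau<0$ somewhere reduces verbatim to the $p$ case. Thus I would prove the $p$ statement in full and dispatch the $q$ statement in one sentence by this symmetry.

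The main obstacle I anticipate is making rigorous the passage from the \emph{local} conclusion of Lemma \ref{lem:maxprincipletorsion} (``near $p$'') to a genuine contradiction with the \emph{global} non-degeneracy hypothesis (``does not lie completely in the osculating plane''). The lemma as quoted only controls the curve in a neighborhood of the endpoint, so naively one only learns that $[p,q]$ lies in $\Pi_p$ near $p$, which is not yet a contradiction. The cleanest fix is the open-closed connectedness argument sketched above: the locus where $\gamma$ stays in $\Pi_p$ is closed by continuity, and at any such point the same local analysis (lying above from the right, below from the left, via Lemma \ref{lem:maxprincipletorsion} applied on both adjacent subsegments) shows it stays in $\Pi_p$ on a full neighborhood, giving openness and hence that the whole segment lies in $\Pi_p$. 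I would want to double-check that Lemma \ref{lem:maxprincipletorsion} is genuinely applicable at interior points (it is, since the sign hypothesis on $\tau$ holds on every subsegment), so that this bootstrap is valid.
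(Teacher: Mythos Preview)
Your approach is the same as the paper's: argue by contradiction, apply Lemma~\ref{lem:maxprincipletorsion} with $\tau\le 0$ to get that near $p$ the segment lies below $\Pi_p$, and combine with the hypothesis that it lies above to force it into $\Pi_p$, contradicting the non-degeneracy assumption. The paper's write-up is terse at exactly the point you flag---it jumps from ``near $p$'' to ``entirely in $\Pi_p$'' in one sentence---so your instinct to justify the local-to-global step is sound.

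Two corrections, though. First, your open--closed argument as phrased is not quite right: the set $\{t:\gamma(t)\in\Pi_p\}$ need not be open, because at an isolated point $r$ with $\gamma(r)\in\Pi_p$ there is no reason for $\Pi_r$ to equal $\Pi_p$, and Lemma~\ref{lem:maxprincipletorsion} only controls position relative to $\Pi_r$. The clean fix is to let $s^*$ be the supremum of those $s$ with $\gamma([0,s])\subset\Pi_p$; then on $[0,s^*]$ the curve is planar, so $\Pi_{\gamma(s^*)}=\Pi_p$, and now Lemma~\ref{lem:maxprincipletorsion} at $\gamma(s^*)$ (applied to the forward subsegment) plus the global ``above'' hypothesis forces the curve into $\Pi_p$ just past $s^*$, contradicting maximality. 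Second, your symmetry remark for the $q$ case contains an error: reversing orientation sends $B\mapsto -B$ but leaves $\tau$ \emph{unchanged}, not $\tau\mapsto -\tau$ (both $\gamma'''$ and $B$ pick up a sign, and they cancel). The paper states this explicitly. Consequently the reduction is not quite ``verbatim to the $p$ case''; rather, after reversal you have a segment lying \emph{below} its osculating plane at its (new) initial point with $\tau\ge 0$ assumed, and you rerun the same contradiction argument using the $\tau\ge 0$ clause of Lemma~\ref{lem:maxprincipletorsion}.
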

\begin{proof}
First suppose that $[p,q]$ lies above its osculating plane at $p$ and assume, towards a contradiction, that $\tau\leq 0$ on $[p,q]$. Then $[p,q]$ lies below $\Pi_p$ by Lemma \ref{lem:maxprincipletorsion}. Thus $[p,q]$ must lie entirely in $\Pi_p$ which is a contradiction. The case where $[p,q]$ lies above its osculating plane at $q$, also follows from Lemma \ref{lem:maxprincipletorsion}, once we switch the orientation of $[p,q]$ and observe that this switches the direction of $B$, but does not effect the sign of $\tau$.
\end{proof}

Similarly, here are the  facts concerning geodesic curvature which we need. For every $\ol p\in\ol\Gamma$, let $C_{\ol p}$ denote the great circle in $\S^2$ which is tangent to $\ol\Gamma$ at $\ol p$. By the region \emph{above} $C_{\ol p}$ we mean the (closed) hemisphere into which the conormal vector $\ol n(\ol p):=\ol p\times \ol T(\ol p)$ points, and the other hemisphere will be referred to as the region \emph{below} $C_{\ol p}$.

\begin{lem}[Lem. 2.1, \cite{ghomi:verticesC}]\label{lem:maxprinciplecurvature}
Suppose that  a segment $[\ol p, \ol q]$ of $\ol\Gamma$ lies above (resp. below) the tangent great circle $C_{\ol p}$. Then either  $[\ol p, \ol q]$ is a part of $C_{\ol p}$, or else $\ol k > 0$ (resp. $< 0$) at some point of $[\ol p, \ol q]$.\qed
\end{lem}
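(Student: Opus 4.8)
The plan is to reduce the lemma to a one–variable maximum principle for the \emph{height function} of $\ol\Gamma$ over the great circle $C_{\ol p}$, in direct analogy with the torsion argument of Corollary~\ref{cor:maxprincipletorsion}. First I would fix coordinates for the word ``above.'' Since $C_{\ol p}$ lies in the plane $\Span\{\ol p,\ol T(\ol p)\}$, its pole is $\pm\ol n(\ol p)$, and the hemisphere into which the conormal $\ol n(\ol p)$ points is exactly $\{x\in\S^2:\langle x,\ol n(\ol p)\rangle\ge 0\}$. Thus, parametrizing $[\ol p,\ol q]$ by a unit-speed arc $\ol\gamma\colon[0,\ell]\to\S^2$ with $\ol\gamma(0)=\ol p$, the hypothesis that $[\ol p,\ol q]$ lies above $C_{\ol p}$ becomes $h\ge 0$ on $[0,\ell]$, where
\[
h(s):=\langle \ol\gamma(s),\ol n(\ol p)\rangle.
\]
It then suffices to prove: if $h\ge 0$ on $[0,\ell]$ and $\ol k\le 0$ everywhere on $[\ol p,\ol q]$, then $\ol\gamma$ lies entirely in $C_{\ol p}$ (the ``below'' case being symmetric, and the contrapositive being the stated lemma).

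Next I would extract a second-order ODE for $h$ from the spherical Frenet relations. For a unit-speed curve on $\S^2$ one has $\ol\gamma''=-\ol\gamma+\kappa_g\,\ol n$, where $\ol n(s)=\ol\gamma(s)\times\ol T(s)$ is the conormal and $\kappa_g(s):=\langle\ol\gamma''(s),\ol n(s)\rangle$, which has the same sign as $\ol k$ because $\ol N=\ol\gamma''/\|\ol\gamma''\|$. Since $\langle\ol p,\ol n(\ol p)\rangle=\langle\ol T(\ol p),\ol n(\ol p)\rangle=0$, I get $h(0)=h'(0)=0$, and the $-\ol\gamma$ term cancels against $h$ to give
\[
h''(s)+h(s)=\kappa_g(s)\,\langle \ol n(s),\ol n(\ol p)\rangle.
\]
Solving this with zero initial data by variation of parameters yields
\[
h(s)=\int_0^s \sin(s-u)\,\kappa_g(u)\,\langle \ol n(u),\ol n(\ol p)\rangle\,du.
\]
For $u\in[0,s]$ with $s$ small, $\sin(s-u)\ge 0$ and $\langle\ol n(u),\ol n(\ol p)\rangle>0$ (by continuity, as it equals $1$ at $u=0$); hence if $\ol k\le 0$ the integrand is $\le 0$, so $h\le 0$ on a small interval $[0,\delta]$. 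Combined with $h\ge 0$, this forces $h\equiv 0$ on $[0,\delta]$, i.e.\ $\ol\gamma$ coincides with $C_{\ol p}$ near $\ol p$.

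Finally I would upgrade this local conclusion to the whole segment by a continuation argument. Let $s^\ast$ be the supremum of those $s$ for which $\ol\gamma|_{[0,s]}\subset C_{\ol p}$. On $[0,s^\ast]$ the curve lies in the great circle, so its tangent stays in the plane of $C_{\ol p}$ and $\ol n(s)=\ol\gamma(s)\times\ol T(s)$, of constant unit length and continuous, cannot jump; hence $\ol n(s^\ast)=\ol n(\ol p)$ and $C_{\ol\gamma(s^\ast)}=C_{\ol p}$. Re-running the ODE argument based at $s^\ast$ then shows $h\equiv 0$ near $s^\ast$, which by connectedness of $[0,\ell]$ forces $s^\ast=\ell$. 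I expect this continuation step to be the main obstacle, as it requires correctly tracking the sign of the conormal as the curve traverses $C_{\ol p}$ and ruling out a tangential ``re-entry'' where $\ol\gamma$ touches $C_{\ol p}$ without lying in it; the integral representation, applied on each subinterval, is exactly what precludes such behavior. An alternative more in the spirit of the paper is to first isolate the forward maximum principle ``$\ol k\ge 0$ on $[\ol p,\ol q]$ implies that, near $\ol p$, the segment lies above $C_{\ol p}$'' (the spherical analogue of Lemma~\ref{lem:maxprincipletorsion}, immediate from the same integral representation) and then deduce the present converse precisely as Corollary~\ref{cor:maxprincipletorsion} is deduced from Lemma~\ref{lem:maxprincipletorsion}.
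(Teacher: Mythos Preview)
The paper does not actually prove this lemma; it is imported from \cite{ghomi:verticesC} (as Lemma~2.1 there) and closed with a \qed. So there is no in-paper argument to compare against---you have supplied a self-contained proof where the paper chose to cite one.

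Your argument is correct. The ODE $h''+h=\kappa_g\langle\ol n,\ol n(\ol p)\rangle$ with $h(0)=h'(0)=0$ is exactly the spherical Frenet computation, and the convolution representation gives the local sign control. The continuation step you single out as the ``main obstacle'' is in fact routine: once $\ol\gamma|_{[0,s^\ast]}\subset C_{\ol p}$, the conormal $\ol n(s)=\ol\gamma(s)\times\ol T(s)$ is constant (equal to $\ol n(\ol p)$) on that interval because both factors lie in the plane of $C_{\ol p}$; hence $\langle\ol n(s^\ast),\ol n(\ol p)\rangle=1$, $h(s^\ast)=h'(s^\ast)=0$, and the same integral estimate restarts verbatim at $s^\ast$. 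This is a standard open--closed argument, and there is no tangential ``re-entry'' issue to worry about separately---the integral inequality already excludes it.

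Your closing remark is also apt: the cited source \cite{ghomi:verticesC} proves the forward statement (the analogue of Lemma~\ref{lem:maxprincipletorsion}) first and then deduces the present converse exactly as Corollary~\ref{cor:maxprincipletorsion} is deduced here, so your ``alternative more in the spirit of the paper'' is in fact the route taken in the original reference.
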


Now we quickly obtain:

\begin{cor}\label{cor:maxprinciplecurvature}
Suppose that $\ol k>0$ on the interior of a segment $[\ol p,\ol q]$ of $\ol\Gamma$. Then, near $\ol p$ and $\ol q$,  $[\ol p, \ol q]$ lies above $C_{\ol p}$ and $C_{\ol q}$ respectively, and does not coincide with them.
\end{cor}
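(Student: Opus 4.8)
The plan is to obtain Corollary \ref{cor:maxprinciplecurvature} from Lemma \ref{lem:maxprinciplecurvature} by the same contrapositive maneuver that produced Corollary \ref{cor:maxprincipletorsion} from Lemma \ref{lem:maxprincipletorsion}; the two statements are formally parallel, with the tangent great circle $C_{\ol p}$ playing the role of the osculating plane $\Pi_p$ and the geodesic curvature $\ol k$ playing the role of $\tau$. First I would prove the statement near $\ol p$. Suppose, toward a contradiction, that the assertion fails near $\ol p$, so that arbitrarily close to $\ol p$ the curve $[\ol p,\ol q]$ does not lie strictly above $C_{\ol p}$; concretely this means some initial subsegment $[\ol p,\ol r]$ lies on or below $C_{\ol p}$. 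Then Lemma \ref{lem:maxprinciplecurvature} (the ``below'' case) forces either $[\ol p,\ol r]\subset C_{\ol p}$ or $\ol k<0$ somewhere on $[\ol p,\ol r]$. The latter contradicts the hypothesis $\ol k>0$ on the interior of $[\ol p,\ol q]$, and the former contradicts it as well, since a segment lying in the great circle $C_{\ol p}$ has $\ol k\equiv 0$ there. This yields the claim near $\ol p$.

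Next I would handle the endpoint $\ol q$. The cleanest route is to reverse the orientation of the segment, reducing the $\ol q$-case to the $\ol p$-case just established, exactly as in the proof of Corollary \ref{cor:maxprincipletorsion}. The only point requiring a moment's care is the behavior of $\ol k$ under a change of orientation. Reversing the traversal of $[\ol p,\ol q]$ replaces $\ol T$ by $-\ol T$, hence reverses the conormal $\ol n=\ol p\times\ol T$ and thereby the labeling of ``above'' and ``below.'' I would note that $\ol N$ is likewise reversed (since the principal normal of a unit-speed spherical curve is insensitive up to the orientation-induced sign, but $\ol B=\ol T\times\ol N$ and $\ol n$ flip together), so that the product $\ol k=\l\ol N,\ol n\r$ is unchanged in sign, just as $\tau$ was unchanged in Corollary \ref{cor:maxprincipletorsion}. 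Thus the hypothesis $\ol k>0$ is preserved, while ``above $C_{\ol q}$'' for the reversed segment translates back to the original ``above $C_{\ol q}$'' statement; applying the $\ol p$-case to the reversed segment then gives the conclusion near $\ol q$.

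The main obstacle, such as it is, lies in verifying the invariance of the sign of $\ol k$ under reorientation and in confirming that ``lies above $C_{\ol p}$'' does not coincide with ``lies in $C_{\ol p}$'' once one excludes the great-circle case. Both are routine once one recalls that $\ol k>0$ already rules out the curve being tangent to $C_{\ol p}$ to higher order along a subarc, so the phrase ``and does not coincide with them'' is automatic from the strict inequality on the interior. Everything else is a direct transcription of the torsion argument, so I expect no genuine difficulty beyond keeping the orientation bookkeeping straight.
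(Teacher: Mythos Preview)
Your overall strategy---contraposition via Lemma~\ref{lem:maxprinciplecurvature} at $\ol p$, then an orientation reversal to handle $\ol q$---is exactly the paper's. But your orientation bookkeeping contains an error. Under reversal of the parametrization one has $\ol\gamma'\mapsto -\ol\gamma'$ while $\ol\gamma''\mapsto\ol\gamma''$; hence $\ol B=\ol\gamma'\times\ol\gamma''/\|\ol\gamma'\times\ol\gamma''\|\mapsto -\ol B$, and therefore $\ol N=\ol B\times\ol T\mapsto(-\ol B)\times(-\ol T)=\ol N$ is \emph{unchanged}. Since the conormal $\ol n=\ol p\times\ol T$ does flip, the geodesic curvature $\ol k=\langle\ol N,\ol n\rangle$ \emph{changes sign}, contrary to your claim. (This is where the analogy with torsion breaks down: $\tau$ is orientation-invariant, but $\ol k$ is not---a left-turning spherical curve becomes right-turning when traversed backward.)

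The error is easily repaired and does not affect the approach. After reversing orientation you have $\ol k<0$ on the interior of $[\ol q,\ol p]$, so you should invoke the ``below'' alternative of Lemma~\ref{lem:maxprinciplecurvature} at the new initial point $\ol q$: the segment lies below $C_{\ol q}$ in the new convention. Since the reversal also swapped ``above'' and ``below,'' this translates back to $[\ol p,\ol q]$ lying above $C_{\ol q}$ in the original orientation, which is what you want. The ``does not coincide'' clause then follows, as you say, from $\ol k\neq 0$ on the interior.
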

\begin{proof}
If $[\ol p,\ol q]$ lies locally below  $C_{\ol p}$, then $[\ol p, \ol q]$  coincides with $C_{\ol p}$ near $\ol p$, by Lemma \ref{lem:maxprinciplecurvature}. So $[\ol p, \ol q]$ lies locally above $C_{\ol p}$ as claimed. The same argument may also be applied to $\ol q$ after switching the orientation of $\ol\Gamma$. Finally, since $\ol k\neq 0$ on $(p,q)$, these circles cannot coincide with $\ol\Gamma$ near $\ol p$ and $\ol q$.
\end{proof}

Next, we link the last two corollaries together by recording that if $\ol p$ is an inflection point of $\ol\Gamma$, then the region above $C_{\ol p}$ corresponds to the region above $\Pi_{p}$. Indeed recall that if $\ol p$ is an inflection, then $\ol N(\ol p)=-\ol p$ as we discussed in the proof of Lemma \ref{lem:osculate}. Thus
$$
\ol n(\ol p)=\ol p\times \ol T(\ol p)=-\ol N(\ol p)\times \ol T(\ol p)=\ol T(\ol p)\times \ol N(\ol p)=\ol B(\ol p).
$$

So Lemma \ref{lem:osculate} quickly yields:

\begin{lem}\label{lem:above}
At an inflection point  $\ol p$ of $\ol\Gamma$, the region above the great circle $C_{\ol p}$ in $\S^2$ coincides with the hemisphere which lies above the osculating plane $\Pi_p$.\qed
\end{lem}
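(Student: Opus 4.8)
The plan is to identify the two hemispheres as the very same subset of $\S^2$, by showing that they are cut out by a common plane through $o$ and selected by a common normal vector.

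First I would collect the two facts already in hand. The computation displayed immediately before the statement shows that at an inflection the conormal of $\ol\Gamma$ equals its binormal, $\ol n(\ol p)=\ol B(\ol p)$, while Lemma \ref{lem:osculate}(iii) gives $B(p)=\ol B(\ol p)$. Chaining these produces the single identity $\ol n(\ol p)=B(p)$, which is the heart of the matter.

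Next I would unwind the two definitions of ``above''. By definition, the region above the great circle $C_{\ol p}$ is the hemisphere into which $\ol n(\ol p)$ points, that is $\{x\in\S^2:\l x,\ol n(\ol p)\r\geq 0\}$. On the other hand, parts (i) and (ii) of Lemma \ref{lem:osculate} tell us that the osculating plane $\Pi_p$ passes through $o$; hence intersecting $\S^2$ with the half-space above $\Pi_p$, namely the one into which $B(p)$ points, yields an honest hemisphere $\{x\in\S^2:\l x,B(p)\r\geq 0\}$. Since $\ol n(\ol p)=B(p)$, these two sets are literally equal, which is the claim. As a bonus, the same identity of normals, together with the fact that both $C_{\ol p}$ and $\Pi_p\cap\S^2$ are great circles through $o$, shows $C_{\ol p}=\Pi_p\cap\S^2$, so even the bounding circles agree.

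I expect no real obstacle: the lemma is a bookkeeping consequence of Lemma \ref{lem:osculate} and the conormal computation, and is therefore immediate. The only step that merits a moment's attention is verifying that $\Pi_p$ contains the center $o$ of the sphere---without this, the intersection of the half-space above $\Pi_p$ with $\S^2$ need not be a hemisphere at all---but this is precisely what parts (i) and (ii) of Lemma \ref{lem:osculate} supply.
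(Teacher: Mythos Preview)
The proposal is correct and follows exactly the paper's approach: the paper's ``proof'' consists of the displayed identity $\ol n(\ol p)=\ol B(\ol p)$ followed by the phrase ``So Lemma \ref{lem:osculate} quickly yields,'' and you have simply unpacked those words carefully. One trivial slip of language in your bonus remark: great circles do not pass \emph{through} $o$; rather, they lie in planes through $o$.
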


\section{Proof of the Main Result}

Before proving our main theorem, we require the following technical fact which shows that the local convexity and star-shaped  properties of $\Gamma$ are stable under small perturbations of $o$.

\begin{lem}\label{lem:oprime}
There exists an open neighborhood $U$ of $o$ such that $\Gamma$ is star-shaped and locally convex with respect to every point $o'$ in $U$.
Further, we may choose $o'$ so that only finitely many osculating planes of $\Gamma$ pass through $o'$.
 \end{lem}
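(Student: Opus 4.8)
The plan is to prove the two assertions of Lemma \ref{lem:oprime} separately, since they are logically distinct: the first is an openness (stability) statement about the two geometric conditions, and the second is a genericity statement that is achieved after the stability is in hand.

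\medskip

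For the \emph{stability} of the star-shaped and locally convex properties, I would argue as follows. Both conditions are defined via strict separating/supporting planes, and strictness is what gives room for perturbation. For the star-shaped property, recall that $\Gamma$ star-shaped with respect to $o$ means that the radial projection $p\mapsto (p-o)/\|p-o\|$ is injective on $\Gamma$; equivalently, $\ol\Gamma$ is an embedded curve in $\S^2$ (a Jordan curve). Since $\Gamma$ is a $\C^3$ immersed simple closed curve and the tangent lines of $\Gamma$ do not pass through $o$ (as established in the Basic Notation section, because each support plane $H$ misses $o$), the projection is in fact an immersion with nonvanishing $\|\ol\gamma'\|$, and this nonvanishing is an open condition in $o$. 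Injectivity of an embedding of a compact manifold is stable under $\C^1$-small perturbations, and the map $o'\mapsto (\text{radial projection from } o')$ depends continuously (indeed $\C^3$) on $o'$; so for $o'$ near $o$ the projected curve remains an embedded immersed curve, hence $\Gamma$ remains star-shaped. For local convexity, the key point is that the supporting plane $H_p$ through each $p$ can be taken to be a \emph{strict} local support plane, i.e. a neighborhood of $p$ lies strictly on the $o$-side of $H_p$ except at $p$ itself; since $\Gamma$ has nonvanishing curvature, the second-order contact guarantees this strictness (the rectifying-plane description in Note \ref{note:TU2}, via the sign of $\langle\gamma(t)-p,N(p)\rangle\sim \|\gamma''\|t^2/2 >0$, makes this explicit). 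A strict inequality involving the continuous data $(p,o')$ over the compact curve $\Gamma$ persists for all $o'$ in a neighborhood of $o$, by a standard compactness argument: the minimum over $\Gamma$ of the relevant strictly positive quantity is attained and positive, so it stays positive under small perturbation of $o'$.

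\medskip

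For the \emph{genericity} assertion, I would show that the set of $o'$ lying on infinitely many osculating planes is small, so a good $o'$ exists in any neighborhood. Consider the osculating-plane map: parametrize $\Gamma$ as $\gamma(s)$ and define $F\colon S^1\times \R \to \R^3$ (or, more naturally, consider the two-parameter family of osculating planes $\Pi_{\gamma(s)}$). A point $o'$ lies on $\Pi_{\gamma(s)}$ iff $\langle o'-\gamma(s), B(s)\rangle = 0$. The union $\bigcup_s \Pi_{\gamma(s)}$ is the image of a $\C^1$ map from a $2$-dimensional parameter space $(s,\text{point in plane})$ into $\R^3$; by Sard's theorem, or simply because a countable/one-parameter family of $2$-planes cannot cover a $3$-ball, its complement is dense. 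More precisely, I would argue that the set of $o'$ through which \emph{infinitely many} osculating planes pass has measure zero (or empty interior): fixing $o'$, the condition $\langle o'-\gamma(s),B(s)\rangle=0$ is an equation in $s$, and for generic $o'$ its zero set is finite because the function $s\mapsto \langle o'-\gamma(s),B(s)\rangle$ is real-analytic-like enough (or at least has isolated zeros for generic $o'$) by an application of Sard to the evaluation map $s\mapsto$ (the plane $\Pi_{\gamma(s)}$) viewed as a map into the space of affine planes. Intersecting the dense set of such ``good'' $o'$ with the open neighborhood $U$ from the first part yields the desired $o'$.

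\medskip

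I expect the \emph{genericity} step to be the main obstacle, and to require the most care. The stability half is a routine compactness-plus-strict-inequality argument, and I am confident it goes through cleanly once the strictness of the support planes (from nonvanishing curvature) is recorded. The subtlety in the genericity half is ensuring one controls \emph{infinitely many} osculating planes rather than merely finitely many exceptional directions: a naive dimension count shows the union of osculating planes need not have measure zero (it is generically all of $\R^3$), so the right statement is about the fiber structure of the osculating-plane map. The clean way is to apply Sard's theorem to the map $\Phi\colon S^1\to \mathcal{A}$ sending $s$ to the osculating plane $\Pi_{\gamma(s)}$, where $\mathcal{A}$ is the $3$-manifold of affine planes in $\R^3$; a point $o'$ lies on infinitely many osculating planes precisely when it lies in the image of a positive-dimensional (hence critical, for a map from a $1$-manifold) portion, and the set of such $o'$ is contained in the image of the critical set of the incidence correspondence, which has measure zero. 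Carefully setting up this incidence variety $\{(s,o') : o'\in\Pi_{\gamma(s)}\}$ and showing its projection to the $o'$-factor has finite generic fibers is the technical heart of the proof.
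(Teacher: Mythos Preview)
Your overall decomposition (stability, then genericity) matches the paper, and your star-shaped argument is essentially identical to the paper's (tangent lines miss $o$, so the radial projection is a $\C^1$ embedding, and embeddings of compact manifolds are stable).

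There is, however, a genuine gap in your stability argument for \emph{local convexity}. You argue that nonvanishing curvature makes the support at each $p$ \emph{strict}, and then appeal to a compactness argument. But strictness of the contact between $\Gamma$ and $H_p$ is not what is at stake: what must be bounded away from zero is $\dist(o,H_p)$, and for that you need to know that the planes $H_p$ can be chosen to depend \emph{continuously} on $p$. A priori the support plane at $p$ is not unique, and your argument never rules out a choice of $H_p$'s drifting toward $o$ along $\Gamma$. Your parenthetical appeal to the rectifying plane via Note~\ref{note:TU2} does not help: Note~\ref{note:TU} constructs a convex space curve for which no single point $o$ satisfies $\langle p-o,N(p)\rangle<0$ for all $p$, so the rectifying plane is \emph{not} in general a valid support plane separating $o$. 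The paper handles exactly this point by invoking an external result that local support planes of $\Gamma$ may be chosen continuously; once that is in hand, compactness gives $\inf_p\dist(o,H_p)>0$ and the perturbation goes through.

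For the genericity step, your instincts are right but your setup is more convoluted than necessary. The paper does precisely what your ``incidence correspondence'' paragraph is groping toward, but with a concrete map: set
\[
f\colon \Gamma\times\R^2\longrightarrow\R^3,\qquad f(p,t,s)=p+t\,T(p)+s\,N(p),
\]
whose image is the union of osculating planes, and take $o'$ to be a regular value of $f$ (Sard). The preimage $f^{-1}(o')$ is then a $0$-dimensional submanifold lying in the bounded region $\{t^2+s^2\le\sup_p\|o'-p\|^2\}$, hence finite. Your alternative suggestion of applying Sard to $\Phi\colon S^1\to\mathcal A$ does not work directly (the regular values of a map from a $1$-manifold to a $3$-manifold are just the points not in the image), and your claim that ``infinitely many osculating planes through $o'$'' forces a positive-dimensional fiber is not justified as stated; the clean route is the explicit $3$-to-$3$ map $f$ above.
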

\begin{proof}
The local convexity assumption ensures that the tangent lines of $\Gamma$ do not pass through $o$. Thus $\ol\Gamma$ is a $\C^3$ immersed curve. Further, the star-shaped assumption ensures that $\ol\Gamma$ is embedded. Now since embeddings of compact manifold are open in the space of $\C^1$ mappings, it follows that projections of $\Gamma$ into unit spheres centered at $o'\in U$ are embedded  as well, for some open neighborhood $U$ of $o$. So the star-shaped property is preserved under small perturbations of $o$. Next we  check that the local convexity assumption is preserved as well. To this end it suffices to show that the local support planes of $\Gamma$ may be chosen continuously, for then the support planes cannot get arbitrarily close to $o$, and hence $\Gamma$ remains locally convex with respect to all points of $U$, assuming $U$ is sufficiently small. To see that the local support planes may be chosen continuously, see \cite[Sec 3.1]{ghomi:stconvex}.
Finally, using Sard's theorem, we may choose a point $o'$ in $U$ such that only finitely many osculating planes of $\Gamma$ pass through $o'$: consider  $f\colon\Gamma\times\R^2\to\R^3$ given by $f(p, t, s)= p+t\, T(p) +s\, N(p)$, and let $o'$ be a regular value of $f$. 
\end{proof}

We also need the following refinement of the tennis ball theorem. Recall that an inflection point $\ol p$ of $\ol\Gamma$ is \emph{genuine}, provided that the geodesic curvature $\ol k$ of $\ol\Gamma$ changes sign at $\ol p$.

\begin{lem}[Thm. 1.2, \cite{ghomi:verticesC}]\label{lem:tennisball}
Suppose that $\ol\Gamma$ has at most  finitely many inflections. Then at least $4$ of these inflections must be genuine.\qed
\end{lem}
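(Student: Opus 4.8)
The plan is to argue by contradiction, reducing everything to the single fact that the standing hypotheses force $\ol\Gamma$ to avoid every closed hemisphere of $\S^2$. First I would record this reformulation: radial projection from $o$ preserves the closed side of any plane through $o$ on which a point lies, so $o$ lies in the interior of $\conv(\Gamma)$ if and only if it lies in the interior of $\conv(\ol\Gamma)$, which holds if and only if $\ol\Gamma$ is contained in no closed hemisphere. This is the only consequence of the assumptions I will use. Next I would dispose of the bookkeeping: since $\ol\Gamma$ has only finitely many inflections, the continuous function $\ol k$ has isolated zeros and hence constant sign on each arc between consecutive zeros, and since $\ol k$ resumes its original sign after one full traversal, the number $G$ of genuine inflections (the sign changes of $\ol k$) is even. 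Thus it suffices to rule out $G=0$ and $G=2$.

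If $G=0$ then $\ol k$ has constant sign, say $\ol k\ge 0$, so $\ol\Gamma$ is a simple closed locally convex spherical curve; since it has only finitely many inflections it is not a great circle, and indeed $\ol k>0$ away from finitely many points. By a standard fact about convex spherical curves, such a curve bounds a convex disk and is contained in an open hemisphere (equivalently, Gauss--Bonnet forces the enclosed area to be at most $2\pi$). This contradicts the reformulation above and settles the case $G=0$.

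The case $G=2$ is the main obstacle, and here I would exploit the maximum principles of Section~4. Let $\ol p_1,\ol p_2$ be the two genuine inflections; they split $\ol\Gamma$ into an arc $A$ on which $\ol k\ge 0$ and an arc $A'$ on which $\ol k\le 0$, and at each $\ol p_i$ the curve crosses its tangent great circle $C_{\ol p_i}$, which by Lemma~\ref{lem:above} bounds the hemisphere above the osculating plane $\Pi_{p_i}$. By Corollary~\ref{cor:maxprinciplecurvature}, near $\ol p_1$ the arc $A$ lies above $C_{\ol p_1}$ while $A'$ lies below it, and likewise at $\ol p_2$. The goal is to promote this \emph{local} one-sidedness into a single \emph{global} supporting great circle $C$, with all of $\ol\Gamma$ on one closed side of $C$, which again contradicts the reformulation. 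Concretely, I would track the tangent great circles $C_{\ol p}$ as $\ol p$ traverses each convex arc: since the pole $\ol n=\ol\gamma\times\ol T$ of $C_{\ol p}$ moves with velocity $-\ol k\,\ol T$, whose sense is fixed on each arc, this family rotates monotonically, and Lemma~\ref{lem:maxprinciplecurvature} can be used to forbid $\ol\Gamma$ from crossing back through a suitably chosen limiting circle. Assembling such a circle, and verifying that both arcs are trapped on the same closed side of it, is the delicate point: the maximum principles control the curve only against the tangent circle at one point, so a genuinely global intersection or degree argument is needed to exclude the arcs bending back across the candidate circle. Once the supporting great circle is produced, $\ol\Gamma$ lies in a closed hemisphere, contradicting the hypothesis, and therefore $G\ge 4$.
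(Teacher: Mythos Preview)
The paper does not prove this lemma at all: it is quoted verbatim as Theorem~1.2 of \cite{ghomi:verticesC} and closed with a bare \qed. So there is no in-paper argument to compare your proposal against; you are supplying a proof where the author deliberately outsourced one.

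As for your attempt itself, the reduction to ``$\ol\Gamma$ lies in no closed hemisphere'' and the parity observation are fine, and the case $G=0$ is essentially correct (the hemisphere containment of a simple spherically convex loop is standard). The case $G=2$, however, is not proved. You yourself flag the gap: you need a \emph{single} great circle $C$ with all of $\ol\Gamma$ on one closed side, but the monotone rotation of $C_{\ol p}$ along the convex arc $A$ controls only $A$ against those circles, not the concave arc $A'$. Corollary~\ref{cor:maxprinciplecurvature} and Lemma~\ref{lem:maxprinciplecurvature} give you one-sidedness of $A'$ only near the endpoints $\ol p_1,\ol p_2$; nothing you have written prevents $A'$ from winding back and crossing the tangent circle at an interior point of $A$, so no limiting circle is forced to support the whole curve. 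The phrase ``a genuinely global intersection or degree argument is needed'' is an accurate description of what is missing, not a substitute for it. The actual proofs of this refinement (in \cite{ghomi:verticesC}, and earlier in Segre and in Thorbergsson--Umehara) use more structure---an analysis of how many times $\ol\Gamma$ meets a moving great circle, or an intrinsic circle-system argument---than the local maximum principles of Section~4 provide. Until you supply that global step, the $G=2$ case, and hence the lemma, remains open in your write-up.
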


Finally we are ready to establish our main result:

\begin{proof}[Proof of Theorem \ref{thm:main}]
By Lemma \ref{lem:oprime}, after replacing $o$ by a nearby point, we may assume that at most finitely many osculating planes of $\Gamma$ pass through $o$. By Lemma \ref{lem:osculate}, osculating planes of $\ol\Gamma$ at its inflections pass through $o$ and coincide with the osculating planes of $\Gamma$. Thus  $\ol\Gamma$ has now at most finitely many inflections. 
Consequently, by the refinement of the tennis ball theorem, Lemma \ref{lem:tennisball},  $\ol\Gamma$ has at least $4$ genuine inflections. 

Let $\ol p_0$, $\ol p_1$ be a pair of  genuine inflections of $\ol\Gamma$ such that the oriented segment 
$[\ol p_0, \ol p_1]$   has no genuine inflections in its interior $(\ol p_0, \ol p_1)$. There are at least $4$ such intervals with pairwise disjoint interiors. Thus, to complete the proof, it suffices to show that $\tau$ changes sign on $(p_0, p_1)$. 

We may assume that $\ol k\geq 0$ on $(\ol p_0,\ol p_1)$, after switching the orientations of $\Gamma$ and $\ol\Gamma$ if necessary. Then $\ol k>0$ near $\ol p_i$ on $(\ol p_0,\ol p_1)$, since  $\ol\Gamma$ has only finitely many inflections. Consequently,  by the maximum principle for geodesic curvature, Corollary \ref{cor:maxprinciplecurvature}, $[\ol p_0, \ol p_1]$ lies locally above its tangent great circles at $\ol p_i$ and does not coincide with them near $\ol p_i$.
This in turn implies, by Lemma \ref{lem:above},
 that 
$[p_0, p_1]$ lies locally above its osculating planes at $p_i$, and does not coincide with them near $p_i$. Thus, by  the maximum principle for torsion, Corollary \ref{cor:maxprincipletorsion}, $\tau$ changes sign on $(p_0, p_1)$ as desired.
\end{proof}

\section*{Acknowledgment}
The author is grateful to Slava Sedykh for correcting some computations in the proof of Lemma \ref{lem:osculate}, and several other comments to improve the exposition of this work. Thanks also to Masaaki Umehara for helpful communications.
\bibliographystyle{abbrv}
\bibliography{references}

\end{document}